\newtheorem{theorem}{Theorem}[section]
\newtheorem{corollary}[theorem]{Corollary}
\theoremstyle{definition}
\def\r{\mathbb R}
\def\h{\mathbb H}
\def\e{\mathbf e}
\def\s{\mathbb S}
\def\M{\mathbb M}
\begin{document}

\title{A note on helices in the Euclidean space and in the hyperbolic space}
\author{Rafael L\'opez}
\address{ Department of Geometry and Topology. University of Granada. 18071  Granada, Spain}
\email{rcamino@ugr.es}
\keywords{general helix, helix, Lancret relation, hyperbolic space, Killing vector field}
  \subjclass{53A04, 53B25}
\dedicatory{Dedicated to Professor Manolo Barros  on the occasion of his retirement from the University of Granada}

\begin{abstract} 
We   prove that general helices in Euclidean space for Killing vector fields associated to rotations are helices,  that is, curves with constant curvature and constant torsion. In hyperbolic space $\h^3$, we obtain the parametrization of helices for the Killing vector fields associated to hyperbolic rotations, spherical rotations and parabolic rotations. It is proved that these helices are geodesics in suitable surfaces of $\h^3$. 

\end{abstract}

\maketitle 

\section{Introduction and motivation}

A {\it general helix} in the Euclidean space $\r^3$ is a curve which makes a constant angle with a fixed direction of $\r^3$. This direction is called the axis of the helix. In  1802, M. A. Lancret asserted that a curve $\gamma$ is a general helix if and only if   the ratio   $\tau/\kappa$ between the torsion $\tau$ and the curvature $\kappa$ of $\gamma$ is constant \cite{la}. This characterization of the general helices is known as the Lancret relation. In fact, this property was later  proved    in 1845 by   Saint Venant \cite{sv}.   A special subclass of general helices are  those ones with constant curvature and constant torsion. These curves are called   {\it helices} and explicit parametrization are known \cite{gra}. These helices are curves contained in circular cylinders of $\r^3$ and they are geodesics in these cylinders. 

The generalization of the notion of helix in other ambient spaces requires to extend the analogue of `fixed direction'. In $\r^3$, a direction is represented by a Killing vector field of $\r^3$ corresponding to the translations of $\r^3$. Therefore, a natural choice is to consider   a `fixed direction' as a Killing vector field.  In 1997, Manuel Barros extended the notion of helices to space forms $\M^3(c)$ by using the notion of a Killing vector field along a curve  \cite{ba}. A  curve $\gamma(s)$ in $\M^3(c)$ is said to be  a general helix if there is a Killing vector field $V$ with constant length  along $\gamma$ such that the angle between $V(s)$ and $\gamma'(s)$ is a non-zero constant along $\gamma$. The vector field $V$ is called the axis of $\gamma$. A property of the space forms $\M^3(c)$ is that   a Killing vector field along $\gamma$ is  the restriction to $\gamma$ of a Killing vector field of $\M^3(c)$. With this generalization, Barros proved the following characterization of the general helices in the space forms $\M^3(c)$ which extends the Lancret relation. We distinguish the three space forms $\M^3(c)$.
\begin{enumerate}
\item  Case $c=0$. A curve $\gamma$ in the Euclidean space $\r^3$ is a general helix   if and only if $\gamma$ is planar or there exists a constant $a\in\r$ such that $\tau=a\kappa$ (Thm. 2 in \cite{ba}).
\item Case $c=-1$. A curve $\gamma$ in the hyperbolic space $\h^3$ is a general helix if and only if $\gamma$ is contained in a geodesic surface or $\gamma$ is a helix (Thm. 1 in \cite{ba}).
\item Case $c=1$. A curve $\gamma$ in the sphere $\s^3$ is a general helix if and only if $\gamma$ is contained in a totally geodesic surface of there exists a constant $a\in\r$ such that $\tau=a\kappa\pm 1$ (Thm. 3 in \cite{ba}).
\end{enumerate}
Thus in $\r^3$ and $\s^3$, we have a Lancret relation for general helices for all Killing vector fields where, in general, $\kappa$ and $\tau$ are functions on the arc-length parameter. In $\h^3$, the Lancret relation is trivial because $\kappa$ and $\tau$ are constant.

The purpose of this paper is to deep in the general helices of $\r^3$ and $\h^3$ and investigate in some of the ideas that appears  in \cite{ba}. A first observation is that in Euclidean space there are more Killing vector fields besides the constant vectors (translations of $\r^3$). The dimension of Killing vector fields in $\r^3$ is $6$ and they are generated by three constant vectors fields, namely, $\partial_x$, $\partial_y$ and $\partial_z$ in $(x,y,z)$ coordinates of $\r^3$ together three Killing vector fields associated to rotations of $\r^3$, $-y\partial_x+x\partial_y$, $-y\partial_z+y\partial_z$ and $-x\partial_z+z\partial_x$. As far the author knows, the study of general helices with these  axes have not been considered in the literature.  In Sect. \ref{s2} we prove that for these vector fields, general helices are in fact helices, that is, curves with constant curvature and constant torsion. 

 \begin{theorem} \label{te0}
A curve $\gamma$ in  Euclidean space $\r^3$ is a general helix for a Killing vector field  of rotations if and only  $\gamma$ is a helix. 
\end{theorem}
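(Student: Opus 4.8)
The plan is to exploit the two defining conditions of a general helix separately: the constant-length condition to locate the curve on a concrete surface, and the constant-angle condition to pin down its shape. Since a rigid motion of $\r^3$ sends Killing vector fields to Killing vector fields, preserves lengths and angles, and leaves curvature and torsion unchanged, I would first normalize the situation: after such a motion I may assume that the axis of the rotation is the $z$-axis, so that the Killing field is $V=-y\partial_x+x\partial_y$ and $V(\gamma(s))=(-y(s),x(s),0)$. The key initial observation is that $|V(\gamma(s))|^2=x(s)^2+y(s)^2$ is exactly the squared distance from $\gamma(s)$ to the $z$-axis; hence requiring that $|V|$ be constant along $\gamma$ forces $\gamma$ to lie on a circular cylinder $x^2+y^2=r^2$, with $r>0$ (if $r=0$ then $V$ vanishes along $\gamma$ and no angle is defined).

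Next I would parametrize the cylinder by $\gamma(s)=(r\cos\theta(s),r\sin\theta(s),z(s))$ with $s$ arc length, so that $r^2\theta'(s)^2+z'(s)^2=1$. A direct computation gives $\langle V(\gamma(s)),\gamma'(s)\rangle=r^2\theta'(s)$ and $|V|=r$, whence the cosine of the angle between $V$ and $\gamma'$ equals $r\,\theta'(s)$. The constant-angle hypothesis therefore says precisely that $\theta'$ is constant; feeding this back into the arc-length relation shows that $z'$ is constant as well. Thus $\theta(s)=bs+c$ and $z(s)=ds+e$ for constants $b,d$, so that $\gamma$ is, up to the initial rigid motion, the standard circular helix $s\mapsto(r\cos(bs+c),r\sin(bs+c),ds+e)$. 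These curves have constant curvature and constant torsion (see \cite{gra}), hence $\gamma$ is a helix.

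For the converse I would run the argument backwards. If $\gamma$ is a helix, then by the fundamental theorem of curves it is congruent to a standard circular helix lying on a cylinder of some radius $r>0$ about an axis; taking $V$ to be the rotational Killing field about that axis, the computation above shows that $|V|\equiv r$ is constant along $\gamma$ and that the angle between $V$ and $\gamma'$ is the non-zero constant with cosine $rb$. Therefore $\gamma$ is a general helix for a Killing field of rotations, which closes the equivalence.

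The only genuinely delicate point is the first one: recognizing that the constant-length requirement is not an innocuous normalization but a strong geometric constraint that already confines $\gamma$ to a cylinder. Once this is in place, the remainder is a short computation in cylindrical coordinates. I would also keep an eye on the degenerate configurations: the value $\theta'\equiv 0$ (angle $\pi/2$) yields a ruling of the cylinder, that is, a straight line, about which one must decide whether to count it as a degenerate helix, and the case $r=0$ must be excluded since there the axis field vanishes and the angle is undefined.
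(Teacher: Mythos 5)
Your proof is correct and follows essentially the same route as the paper: normalize the axis to the $z$-axis, use the constant-length condition to confine $\gamma$ to the cylinder $x^2+y^2=r^2$, and then use the constant-angle condition in cylindrical coordinates to force $\varphi'$ and $z'$ constant, yielding the standard circular helix. The only differences are cosmetic: you make the converse explicit via the fundamental theorem of curves (the paper leaves it implicit in the parametrization), and you exclude the degenerate case $r=0$, which the paper instead records as the $z$-axis case in its refined statement (Theorem \ref{te01}).
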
 

From a similar viewpoint, Munteanu and Nistor investigated     curves in Euclidean space $\r^3$ that make a constant angle with these Killing vector fields of rotations \cite{mn}. However, they do not assume that the Killing vector field has constant length along the curve. Thus, and according to Thm. \ref{te0}, these curves  are not, in general, helices and   they may do not satisfy the Lancret relation.

The second scenario is the hyperbolic space $\h^3$. Again, after the Barros' paper, there is not literature on the general helices of $\h^3$ which is surprising  in a classic topic in differential geometry as it is the study of helices. This may be due that   the general helices are helices and the interest may be reduced.   Even in such a situation, it deserves to know the parametrizations of these helices.   Although this may seem a mere exercise, the dimension of the Killing vectors fields of $\h^3$ is $6$. The variety of Killing vector fields in $\h^3$   requires a work in finding all types of helices for each one of the Killing vector fields.  In this paper, we find in Sect. \ref{s3} the explicit parametrizations of the helices for all Killing vector fields that come from uniparametric groups of rigid motions of $\h^3$.  See Theorem \ref{t31} (hyperbolic rotations), Theorem \ref{t32} (spherical rotations) and Theorem \ref{t33} (parabolic rotations). In consequence, it remains as an open question  the computation of the helices for the other two Killing vector fields of $\h^3$.

Other notions of helices have been considered in the literature for different choices of `fixed direction' and ambient spaces. Without to included here all references, we refer the following: in Minkowski space \cite{fgl}; with $F$-constant vector field \cite{ly};  in a Riemannian space, using parallel transported vector fields \cite{da,ha};  in a Lie group with a bi-invariant metric and  the direction is a left-invariant vector field \cite{ci}; in Berger spheres and in the product space $\s^2\times\r$  where the fixed direction is the vertical  Killing vector field \cite{mo,ni}.

\section{Helices in Euclidean space associated to rotations}\label{s2}

Consider in Euclidean space $\r^3$ a Killing vector field associated to a uniparametric group of rotations. We  prove that general helices for this axis are  helices, that is, curves with constant curvature and constant torsion. In other words, the Lancret relation $\tau=a\kappa$ for non-constant functions $\tau$ and $\kappa$ only holds for Killing vector fields associate to translations.  This implies that the family of general helices for the Killing vector fields of rotations is very small comparing with the helices whose axis is a constant Killing vector field. 

Without loss of generality, we can assume that the Killing vector field is associated to   the rotations about the $z$-axis. Then 
\begin{equation}\label{ww}
W=-y\partial_x+x\partial_y.
\end{equation}
We now prove Thm. \ref{te0} obtaining the parametrizations of the corresponding helices.

\begin{theorem} \label{te01}
A curve $\gamma$ in  Euclidean space $\r^3$ is a general helix for the Killing vector field $W$   if and only $\gamma$   is the $z$-axis    of equation $\{x=0,y=0\}$  or $\gamma$ is a helix. The parametrizations of these helices are
\begin{equation}\label{h2}
\gamma(s)=(r\cos\frac{\cos\theta}{r} s,r\sin \frac{\cos\theta}{r} s,s\sin\theta),
\end{equation}
where $r>0$ and $\theta$ is the angle that makes $\gamma$ with the axis $W$.
\end{theorem}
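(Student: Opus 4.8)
The plan is to convert the two conditions defining a general helix for the axis $W$ into elementary differential equations for the coordinates of $\gamma$, integrate them, and read off the parametrization. Write $\gamma(s)=(x(s),y(s),z(s))$ parametrized by arc length, so that along $\gamma$ we have $W=(-y,x,0)$, hence $|W|^2=x^2+y^2$ and $\langle W,\gamma'\rangle = xy'-yx'$, while $|\gamma'|=1$.

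First I would use that $|W|$ is constant along $\gamma$. This forces $x^2+y^2=r^2$ for some constant $r\ge 0$. If $r=0$ then $x\equiv y\equiv 0$, so $\gamma$ is the $z$-axis $\{x=0,y=0\}$ (here $W$ vanishes identically along $\gamma$, the degenerate alternative in the statement). If $r>0$, the curve lies on the cylinder of radius $r$ about the $z$-axis and I may set $x=r\cos\phi(s)$, $y=r\sin\phi(s)$ for a smooth function $\phi$, keeping $z=z(s)$ arbitrary. A short computation then gives $\langle W,\gamma'\rangle = r^2\phi'$ and the unit-speed relation $r^2(\phi')^2+(z')^2=1$.

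Next I would impose the constant-angle condition. Since $|W|=r$ is already constant and $|\gamma'|=1$, the cosine of the angle between $W$ and $\gamma'$ equals $r^2\phi'/r=r\phi'$, so the hypothesis says precisely that $r\phi'=\cos\theta$ is a nonzero constant, i.e. $\phi'=\cos\theta/r$. Integrating, and absorbing the constants of integration with a rotation about and a translation along the $z$-axis (isometries that preserve $W$), gives $\phi(s)=(\cos\theta/r)s$. Substituting back into the unit-speed relation yields $(z')^2=\sin^2\theta$, so $z'=\pm\sin\theta$; after the reflection $z\mapsto -z$ if needed (again preserving $W$) we obtain $z(s)=s\sin\theta$. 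This is exactly \eqref{h2}.

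For the converse, and to confirm that \eqref{h2} is a genuine helix, I would carry out the standard Frenet computation for the circular helix $\gamma(s)=(r\cos(as),r\sin(as),bs)$ with $a=\cos\theta/r$ and $b=\sin\theta$ (so that $r^2a^2+b^2=1$), finding the constant curvature $\kappa=ra^2=\cos^2\theta/r$ and constant torsion $\tau=ab=\sin\theta\cos\theta/r$; conversely one checks directly from \eqref{h2} that $|W|=r$ and $\cos(\text{angle})=\cos\theta$ are constant, so such a curve is a general helix for $W$. I do not expect any genuine difficulty here; the only points requiring care are the bookkeeping needed to normalize the integration constants by isometries fixing $W$, and isolating the degenerate $z$-axis case (where $W\equiv 0$ and the angle is undefined) so that it does not get lumped together with the honest helices.
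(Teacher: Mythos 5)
Your proposal is correct and follows essentially the same route as the paper's proof: constancy of $|W|$ along $\gamma$ forces the curve onto a cylinder $x^2+y^2=r^2$, the constant-angle condition then gives $\varphi'=\cos\theta/r$ for the angular coordinate, and the unit-speed relation yields $z(s)=s\sin\theta$, producing \eqref{h2} with the same constant curvature and torsion. If anything you are slightly more careful than the paper on two minor points: you isolate the degenerate $r=0$ case correctly (noting $W$ vanishes there, so the angle is undefined, whereas the paper assigns it $\theta=\pi/2$), and you explicitly normalize the integration constants by isometries preserving $W$ and record the converse verification.
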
 

\begin{proof} 
Let $\gamma(s)=(x(s),y(s),y(s))$, $s\in I$, be a curve in $\r^3$ parametrized by arc-length and suppose that $\gamma$ is a general helix with axis $W$.  Since $W$ has constant length along $\gamma$, there is a constant $r>0$ such that $|W(\gamma(s))|=r$. Thus $x(s)^2+y(s)^2=r^2$ for all $s\in I$. If $r=0$, then $\gamma$ is the vertical line of equation $x=y=0$. The angle is $\theta=\pi/2$. This proves the first case of the theorem.

Suppose now $r>0$. This implies that $\gamma$ is contained in the circular cylinder $x^2+y^2=r^2$ of radius $r$   whose axis is the $z$-axis. Then there is a function $\varphi=\varphi(s)$ such that $x(s)=r\cos\varphi(s)$ and $y(s)=r\sin\varphi(s)$.    If $\theta$ is the angle between $\gamma$ and $W$, then 
$$\cos\theta=\frac{\langle \gamma'(s),W(\gamma(s))\rangle}{|W(\gamma(s))|}=\frac{xy'-yx'}{r}=r\varphi'(s).$$
This yields $\varphi(s)=\frac{\cos\theta}{r} s$. Moreover, $z'^2=1-x'^2-y'^2=1-\cos^2\theta$. Then, up to a sign, 
we have $z(s)=s\sin\theta$. This implies that $\gamma$ is parametrized by \eqref{h2}.   Its curvature and torsion are
$$\kappa=\frac{\cos^2\theta}{r},\quad \tau=\frac{\sin\theta\cos\theta}{r}.$$
\end{proof}

Let us observe that   the general helices of the constant Killing vector field include planar curves. These curves have zero torsion, but arbitrary curvature. However this is not the case for the helices of $W$. The only helix with $\tau=0$ is when $\theta=0$. The helix is the horizontal circle $s\mapsto (r\cos(s/r),r\in(s/r),c)$ and its curvature is $\kappa=1/r$. 

Notice that the helix \eqref{h2} is the standard helix of axis $\partial_z$. It is obvious that the radius is $r$. The pitch $h$ is calculated as the height difference of two consecutive points with the same $xy$-projection. It is not difficult to see that the pitch is obtained as the height of the points $s=0$ and $s=2\pi /\cos\theta$, obtaining that $h=2\pi r \tan\theta$.

As a consequence, we obtain the surprising property that the helices \eqref{h2} are also helices for   the Killing vector field $\partial_z$, that is, the same curve is helix for two different axes.

\begin{corollary} General helices of $\r^3$ with axis $ -y\partial_x+x\partial_y$   coincide with the family of  helices of axis $\partial_z$. 
\end{corollary}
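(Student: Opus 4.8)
The plan is to prove the equality of the two families by a double inclusion, leaning entirely on the classification already obtained in Theorem \ref{te01}; essentially no new computation with the Frenet apparatus is required beyond reading off the tangent vector of \eqref{h2}.

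For the inclusion of the general helices of $W$ into the helices of axis $\partial_z$, I would invoke Theorem \ref{te01}: such a curve is either the $z$-axis or a curve of the form \eqref{h2}. In the latter case the theorem already records that $\kappa=\cos^2\theta/r$ and $\tau=\sin\theta\cos\theta/r$ are constant, so $\gamma$ is a helix; it then remains only to check the constant angle with $\partial_z$. Differentiating \eqref{h2} gives the unit tangent $\gamma'(s)=(-\cos\theta\sin\frac{\cos\theta}{r}s,\ \cos\theta\cos\frac{\cos\theta}{r}s,\ \sin\theta)$, whence $\langle\gamma',\partial_z\rangle=\sin\theta$ is constant along $\gamma$. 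Thus the angle with $\partial_z$ equals $\pi/2-\theta$, constant, and $\gamma$ is precisely the standard helix of axis $\partial_z$ noted in the remark. The $z$-axis is a straight line in the direction $\partial_z$, arising as the degenerate limit $r\to 0$ of \eqref{h2}.

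For the reverse inclusion I would start from an arbitrary helix whose axis is $\partial_z$, that is, a curve of constant curvature and torsion making a constant angle with $\partial_z$. By the classical description of circular helices such a curve lies on a cylinder coaxial with the $z$-axis; after passing to the arc-length parameter and applying an isometry that fixes the $z$-axis (a rotation about it together with a translation along it, both of which preserve $W$ and $\partial_z$ and hence preserve each of the two families), it takes exactly the form \eqref{h2} for suitable $r>0$ and $\theta$. Theorem \ref{te01} then identifies it as a general helix with axis $W$. Combining the two inclusions yields the asserted coincidence.

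The only delicate point, and the one I would state explicitly, concerns the degenerate boundary of the family: the horizontal circles ($\theta=0$) and the vertical lines ($\theta=\pi/2$, or $r=0$ for the $z$-axis itself). These are limiting cases of \eqref{h2} and belong to, or are excluded from, both families simultaneously, so they do not disturb the equality. The substantive content is just the remark that \eqref{h2} is the standard $\partial_z$-helix, now read in both directions.
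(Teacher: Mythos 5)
Your first inclusion is correct and is essentially the paper's entire proof: by Theorem \ref{te01} a general helix of $W$ is the $z$-axis or a curve \eqref{h2}, and the computation $\langle\gamma',\partial_z\rangle=\sin\theta$ shows the constant angle with $\partial_z$. The gap is in your reverse inclusion, specifically in the claim that a helix with axis $\partial_z$ ``lies on a cylinder coaxial with the $z$-axis.'' Under the reading you are using (constant curvature and torsion, constant angle with the Killing field $\partial_z$) this is false: such a curve is a circular helix whose central line is \emph{some} vertical line, not necessarily the $z$-axis, for instance
\[
\gamma(s)=\Bigl(x_0+r\cos\tfrac{s\cos\theta}{r},\ y_0+r\sin\tfrac{s\cos\theta}{r},\ s\sin\theta\Bigr),\qquad (x_0,y_0)\neq(0,0).
\]
Your proposed normalization cannot repair this: rotations about the $z$-axis and translations along it fix the $z$-axis, so they can never carry the central line of this helix onto the $z$-axis; the isometry that would do it is a horizontal translation, and horizontal translations do \emph{not} preserve $W$. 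Indeed the curve above is not a general helix of $W$ at all, since $|W(\gamma(s))|^2=(x_0+r\cos\tfrac{s\cos\theta}{r})^2+(y_0+r\sin\tfrac{s\cos\theta}{r})^2$ is non-constant, even though it has constant $\kappa$, $\tau$ and constant angle with $\partial_z$. So under the literal Killing-field reading of ``helix of axis $\partial_z$'' the two families genuinely differ, and no argument can close your second inclusion.

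What the corollary (and the paper's one-line ``the converse is immediate'') intends is the narrower classical reading: ``helices of axis $\partial_z$'' are the standard helices whose central line \emph{is} the $z$-axis, i.e.\ exactly the curves \eqref{h2} (together with the degenerate $z$-axis). Under that reading your reverse inclusion needs no normalization at all and the corollary reduces to your first paragraph. You should therefore either adopt that reading and delete the isometry argument, or flag that the statement fails for arbitrarily positioned $\partial_z$-helices; as written, your middle paragraph asserts a false coaxiality claim and then invokes a group of isometries too small to fix it, so the proof does not close.
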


\begin{proof}   General helices of $ -y\partial_x+x\partial_y$   are parametrized by   \eqref{h2}, which are helices of $\partial_z$. The converse is immediate.  
\end{proof}

We point out  that if $\theta$ is the angle that makes $\gamma$ with the axis $W$, the angle that makes $\gamma$ with $\partial_z$ is $\frac{\pi}{2}-\theta$.

We finish this section comparing the results of Thm. \ref{te01} with the curves that make a constant angle with the vector field $W$. As we said in the Introduction, these curves were obtained in \cite{mn}. The authors in \cite{mn} do not assume that the   Killing vector field $W$ has constant length along the curve. Recall that this condition implies that   the curve is contained in a circular cylinder. If we drop this condition, then the potential curves are in arbitrary position in $\r^3$. In \cite{mn}, the authors investigated this situation obtaining a full classification of this family of curves. Therefore, and as a conclusion of Thm. \ref{te01} these curves  do not satisfy, in general, the Lancret relation $\tau=a\kappa$, otherwise, the curvature and the torsion would be constant. 

We now recall the construction given in \cite{mn}. The curves with constant angle with the Killing vector field $W$ are parametrized by   
\begin{equation}\label{gg} 
\gamma(s)=\left(r(s)\cos\phi(s),\ r(s)\sin\phi(s),z(s)\right).
\end{equation}
 The parameter $s$ is the arc length parameter of $\gamma$ and $r=r(s)$, $z=z(s)$ and $\phi=\phi(s)$   are smooth  functions  defined by  
\begin{equation*}
\begin{split}
r(s)&=r_0+\sin\theta\int\limits^s \cos\omega(t) dt,\\
 z(s)&=\sin\theta\int\limits^s \sin\omega(t) dt,\\
\phi(s)&=\cos\theta\int\limits^s\frac{dt}{r(t)},
\end{split}
\end{equation*}
where $r_0\in{\mathbb{R}}$ and $\omega=\omega(s)$ is a smooth function on $I\subset\r$. Here we notice a gap in \cite{mn} where the circular helices \eqref{h2} do not appear. However, if we take $\omega$ to be constant and   $\omega=\pi/2$, then we obtain from \eqref{gg} the curve $\gamma(s)=(r_0 \cos \left(\frac{s\cos\theta}{r_0}\right),r_0 \sin \left(\frac{s\sin\theta}{r_0}\right),s \sin \theta )$. This curve coincides with  the curve \eqref{h2}.

We now show two explicit and interesting examples for some choices of $\omega$ in \eqref{gg}. These examples will allow to understand well the context of the Barros' results.
\begin{enumerate}
\item Suppose that  $\omega$ is a constant function,  $\omega(s)=w$ and $w\not=\pi/2$. In this case,  the curve \eqref{gg}   is contained in the cone $x^2+y^2=\cot^2(w) z^2$.A computation of the curvature and torsion of the corresponding curve \eqref{gg} gives 
\begin{equation*}
\begin{split}
\kappa(s)&=\frac{\sqrt{4 \csc ^2(\theta )-2 \cos (2 \theta ) \sin ^2(w)+\cos (2 w)-5}}{2 s \cos (w)},\\
\tau(s)&=\frac{\cos (\theta ) \tan (w)}{s}.
\end{split}
\end{equation*}
In consequence $\tau/\kappa$ is constant and thus, according to the Barros' result, the curve is a general helix with its corresponding axis. But since the curvature and torsion are not constant, Theorem \ref{te01} implies that its axis does not come from rotations of $\r^3$. Then necessarily its axis is a constant Killing vector field.    In order to find the axis, we apply the Barros' result which says that the axis is $\cos\varphi T(s)+\sin\varphi N(s)$, where $\varphi$ is the angle that makes the helix with its axis. A priori, we do not know what is the angle $\varphi$. A simple computation proves that by taking $\varphi$ as $\cos\varphi=\sin\theta\sin(w)$, then the axis is $(0,0,1)=\partial_z$. In fact, we have  $\langle\gamma'(s),\partial_z\rangle=\cos\varphi$. Definitively, the curve $\gamma$ has the following properties: it is a general helix for $\partial_z$, it is not a helix for the Killing vector field  $W$ and $\gamma$ makes a constant angle with $\partial_z$ as well as with the Killing vector field $W$.
 
\item Let $\omega(s)=s$ in \eqref{gg}. Then the curve $\gamma$, \eqref{gg}, is parametrized by  
$$\gamma(s)=\left(\begin{array}{l}
\sin \theta \sin s \cos \left(\cot \theta  \log \left(\tan \left(\frac{s}{2}\right)\right)\right)\\
\sin \theta  \sin s \sin \left(\cot \theta  \log \left(\tan \left(\frac{s}{2}\right)\right)\right)\\
-\sin \theta\cos s\end{array}
\right).$$
The curve $\gamma$ is contained in a sphere centered at $0$ and radius $|\sin\theta|$. Its curvature and torsion are
$$\kappa=\frac{\sqrt{1-\sin ^2(\theta ) \cos ^2(s)}}{\sin (\theta ) \sin (s)},\quad \tau=\frac{\cos (\theta )}{1-\sin ^2(\theta ) \cos ^2(s)}.$$
Therefore this curve does not satisfy the Lancret relation. However, this curve makes a constant angle with the Killing vector field $W$, \eqref{ww}.
\end{enumerate}

  \section{Helices in hyperbolic space}\label{s3}
  In this section we investigate the helices of $\h^3$ whose axis is a Killing vector field given by a uniparametric family of group of rotations.    Consider the upper half space model of the hyperbolic space $\h^3$, that is,   $\r^{3}_+=\{(x,y,z)\in\r^{3}\colon z>0\}$, endowed with the metric
$$\langle\cdot,\cdot\rangle=\frac{dx^2+dy^2+dz^2}{z^2}.$$
 Denote by $\{\e_1,\e_2,\e_3\}$ to the canonical basis of $\r^3$. In this section, the Euclidean metric of $\r^3_{+}$ is denoted by $\langle\cdot,\cdot\rangle_e$. Since   the metric $\langle\cdot,\cdot\rangle$ is conformal to the Euclidean metric $\langle\cdot,\cdot\rangle_e$  of $\r^3_{+}$, the Levi-Civita connections  $\nabla$ of $\h^3$  and $\nabla^e$ of  $\r^3$ are related by
\begin{equation}\label{r}
\nabla_XY=\nabla^e_XY-\frac{X_3}{z}Y-\frac{Y_3}{z}X+\frac{\langle X,Y\rangle_e}{z}\e_3,
\end{equation}
for two vector fields $X$ and $Y$. The  subindex $()_3$ represents the coordinate of the vector field with respect to the basis $\{\partial_x,\partial_y,\partial_z\}$.

In $\h^3$,   the space of Killing vector fields is $6$-dimensional. A basis is formed by 
$$\{ x\partial_x+y\partial_y+z\partial_z\, -y\partial_x+x\partial_y, \partial_x, \partial_y, \frac{x^2}{2}\partial_x+xy\partial_y+xz\partial_z,  xy\partial_x+\frac{y^2}{2}\partial_y+yz\partial_z \}.$$
The first four Killing vector fields are associated to a uniparametric groups rigid motions of $\h^3$. We recall these groups.
\begin{enumerate}
\item The Killing vector field $V=x\partial_x+y\partial_y+z\partial_z$  is tangent to the hyperbolic rotations $\{\mathcal{H}_t\colon t\in\r\}$ from $0\in\r^3$. From the Euclidean viewpoint, these rotations are dilations from $0$, $\mathcal{H}_t(x,y,z)=e^t(x,y,z)$.
\item The Killing vector field $W=-y\partial_x+x\partial_y$ is tangent to the elliptic rotations $\{\mathcal{E}_t\colon t\in\r\}$. From the Euclidean viewpoint, the rotations about the $z$-axis are $\mathcal{E}_t(x,y,z)=(x\cos t-y\sin t,x\sin t+y\cos t,z)$.
\item The Killing vector field $P=\partial_x$ is tangent to the parabolic rotations $\{\mathcal{P}_t\colon t\in\r\}$ which are, from the Euclidean viewpoint, horizontal translations along the $x$-axis, $\mathcal{P}_t(x,y,z)=(x+ t,y,z)$. The Killing vector field $\partial_y$ is also tangent to parabolic rotations changing the $x$-axis by the $y$-axis.
\end{enumerate}

Let $\gamma(s)$ be a non-geodesic curve in $\h^3$ parametrized by arc-length with   Frenet frame   $\{T,N,B\}$. The Frenet equations are  
\begin{equation*}
\begin{split}
\nabla_TT&=\kappa N,\\
\nabla_TN&=-\kappa T+\tau B,\\
\nabla_TB&=-\tau B,
\end{split}
\end{equation*}
where $\kappa$ and $\tau$ are the curvature and the torsion  of $\gamma$, respectively. 

In the following three subsections, we calculate the helices of each one of the above four vector Killing fields. Since the study with the Killing vector field $\partial_y$ is similar to $\partial_x$, we only do with $\partial_x$. As a consequence of the results, we point out the following observation. In principle, in the definition of general helix, the angle that makes the curve with the axis is arbitrary, but it may occur that not all angles were possible. However, we will prove that, indeed, there exist helices for the full range of angles. The same   happened in Euclidean space with the general helices regardless the type of axis.

  \subsection{The Killing vector field $V=x\partial_x+y\partial_y+z\partial_z$}

  \begin{theorem}\label{t31}
   Helices with axis the Killing vector field $V=x\partial_x+y\partial_y+z\partial_z$ and making an angle $\theta$ are the $z$-axis of equation $\{x=0,y=0\}$ or the curves parametrized by 
  \begin{equation}\label{h0}
 \gamma(s)=me^{\frac{\cos\theta}{\sqrt{1+c^2}}s}\left(c \cos\frac{s\sin\theta}{c},c \sin \frac{s\sin\theta}{c}, 1\right).
 \end{equation}
where $m,c>0$. The curvature $\kappa$ and the torsion $\tau$ of $\gamma$   are
\begin{equation}\label{h01}
\kappa=\frac{c^2+\sin^2\theta}{c\sqrt{1+c^2}},\quad \tau=\frac{\sin\theta\cos\theta}{c\sqrt{1+c^2}}.
\end{equation}
Moreover these helices are contained in the Killing cylinder of equation $x^2+y^2=c^2z^2$ and they are geodesics in this cylinder.
  \end{theorem}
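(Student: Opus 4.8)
The plan is to extract everything from the two defining conditions of a general helix with axis $V$ — that $|V|$ be constant along $\gamma$ and that the angle $\theta$ between $V$ and $\gamma'$ be constant — and only afterwards to verify that the resulting curve has constant curvature and torsion, so that it is automatically a helix, in agreement with Barros' result for $\h^3$.

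First I would write $\gamma(s)=(x(s),y(s),z(s))$, parametrized by arc length, and compute the length of the axis in the hyperbolic metric, $|V|^2=\langle V,V\rangle=(x^2+y^2+z^2)/z^2$. Constancy of this quantity forces $x^2+y^2+z^2=(1+c^2)z^2$ for some constant, that is, $x^2+y^2=c^2z^2$ with $c\ge 0$, and $|V|=\sqrt{1+c^2}$. This immediately splits the problem into the degenerate case $c=0$, where $x=y=0$ and $\gamma$ is the $z$-axis (with $\theta=0$), and the case $c>0$, in which $\gamma$ lies on the Killing cone $x^2+y^2=c^2z^2$.

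On this cone I would use the adapted parametrization $\gamma(s)=(cz\cos\phi,cz\sin\phi,z)$ with $z=z(s)>0$ and $\phi=\phi(s)$. The constant-angle condition reads $\langle\gamma',V\rangle=\cos\theta\,|V|=\cos\theta\sqrt{1+c^2}$; since $xx'+yy'+zz'=\frac{1}{2}\big((1+c^2)z^2\big)'=(1+c^2)zz'$, the hyperbolic inner product gives $\langle\gamma',V\rangle=(1+c^2)z'/z$, whence $z'/z=\cos\theta/\sqrt{1+c^2}$ and $z(s)=m\,e^{\cos\theta\,s/\sqrt{1+c^2}}$. Feeding this into the unit-speed relation $\big((1+c^2)z'^2+c^2z^2\phi'^2\big)/z^2=1$ collapses to $c^2\phi'^2=\sin^2\theta$, so $\phi(s)=\pm(\sin\theta/c)s$ up to an additive constant; after rotating coordinates this recovers exactly \eqref{h0}. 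I stress that the whole parametrization drops out of the general-helix hypotheses alone, with no need to assume constant curvature in advance.

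It then remains to compute $\kappa$ and $\tau$ and confirm they equal the constants in \eqref{h01}. For this I would use the conformal relation \eqref{r}: with $T=\gamma'$ one gets $\nabla_TT=\gamma''-(2z'/z)T+(\langle T,T\rangle_e/z)\e_3$, and the unit-speed identity $\langle T,T\rangle_e=z^2$ reduces this to $\nabla_TT=\gamma''-(2z'/z)\gamma'+z\,\e_3$; then $\kappa=|\nabla_TT|$, and $\tau$ follows from one further covariant derivative after normalizing $N=\nabla_TT/\kappa$ and completing the Frenet frame $\{T,N,B\}$. I expect the torsion to be the only genuinely laborious step, since it is third order and must be carried out with the hyperbolic connection rather than the Euclidean one; the curvature should fall out quickly once $\nabla_TT$ is in hand. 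Finally, for the geodesic assertion I would avoid a direct geodesic-curvature computation and instead pull the hyperbolic metric back to the cone via $(z,\phi)\mapsto(cz\cos\phi,cz\sin\phi,z)$, obtaining the induced metric $(1+c^2)\,dz^2/z^2+c^2\,d\phi^2$. The substitution $u=\log z$ turns this into the flat metric $(1+c^2)\,du^2+c^2\,d\phi^2$, so the cone is intrinsically flat and its geodesics are the straight lines in $(u,\phi)$. Since in \eqref{h0} both $u=\log z$ and $\phi$ are affine in $s$, the helix is such a straight line, hence a geodesic of the Killing cylinder, as claimed.
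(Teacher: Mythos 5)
Your proposal is correct, and its core---the derivation of the parametrization---follows the same path as the paper: constancy of $|V|=\sqrt{1+c^2}$ places $\gamma$ on the cone $x^2+y^2=c^2z^2$ (with the $z$-axis as the degenerate case $c=0$), the constant-angle condition integrates to $z(s)=me^{\frac{\cos\theta}{\sqrt{1+c^2}}s}$, and unit speed forces $c^2\phi'^2=\sin^2\theta$; your setup of $\kappa$ and $\tau$ via \eqref{r} is also the paper's (the paper computes $\nabla_TT$ explicitly in \eqref{normal} and is itself brief about $\nabla_TN$, so leaving those as mechanical steps is reasonable, though a complete write-up should execute them and confirm \eqref{h01}). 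The genuine divergence is the geodesic claim. The paper argues extrinsically: the principal normal $N=\nabla_TT/\kappa$ is proportional to the Euclidean gradient $\nabla^ef=2(x,y,-c^2z)$ of the cone's defining function, which by conformality is also normal to the cone in the hyperbolic metric; this costs nothing extra once \eqref{normal} is in hand. You argue intrinsically: the induced metric on the cone is $(1+c^2)\,dz^2/z^2+c^2\,d\phi^2$, the substitution $u=\log z$ turns it into the flat constant-coefficient metric $(1+c^2)\,du^2+c^2\,d\phi^2$, so intrinsic geodesics are exactly the affinely parametrized lines in $(u,\phi)$, and the helix \eqref{h0} has both $u$ and $\phi$ affine in the arc length $s$. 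Your route is independent of the curvature computation, exhibits the Killing cylinder as intrinsically flat, and makes it transparent that helices exist for every angle $\theta$ (they are precisely the lines through a point in the flat coordinates); the paper's route recycles work already done for $\kappa$. One small point of care: the sign ambiguity $\phi'=\pm\sin\theta/c$ is absorbed by a reflection (e.g. $y\mapsto -y$), not a rotation as you claim---the same looseness as the paper's ``up to a constant''---and it flips the sign of $\tau$, so strictly both arguments classify the helices only up to an orientation-reversing isometry of $\h^3$.
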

  \begin{proof}
    Let $\gamma(s)$, $s\in I$, be a curve such that $V$ is constant along $\gamma$. If $\gamma(s)=(x(s),y(s),z(s))$, then there is a constant $c\geq 0$  such that 
  $$\frac{x^2+y^2+z^2}{z^2}=1+c^2.$$
  If $c=0$, then $\gamma$ is the geodesic given by the vertical line $\{x=0,y=0\}$. This curve makes an angle $\theta=0$ with $V$. Notice that $V(s)={\partial_z}_{|\gamma(s)}$.
  
  From now, we suppose $c>0$. Then  $\gamma$ is contained in the Euclidean  cone $\mathcal{C}_c$ of equation $x^2+y^2=c^2 z^2$. In hyperbolic geometry, the surface $\mathcal{C}_c$ is a Killing cylinder  because it is the locus of points equidistant from the geodesic $\{x=0,y=0\}$.
  
  Since $\gamma$ is contained in $\mathcal{C}_c$,   there exists a smooth function $\varphi=\varphi(s)$ such that 
 \begin{equation}\label{p1}
 \gamma(s)=(cz(s)\cos\varphi(s),cz(s)\sin\varphi(s),z(s)).
 \end{equation}
 Let $\theta$ be the angle between $\gamma$ and $V$. The modulus of $V$ is obtained from 
 $$|V|^2=\langle \gamma,\gamma\rangle=\frac{x^2+y^2+z^2}{z^2}=1+c^2.$$
 Then
 $$\cos\theta=\frac{\langle V,\gamma'\rangle}{|V(\gamma)|}=\frac{xx'+yy'+zz'}{\sqrt{1+c^2}z^2}=\sqrt{1+c^2}\frac{ z'(s)}{z(s)}.$$
 By solving this ODE,  there is $m>0$ such that
 $$z(s)=me^{\frac{\cos\theta}{\sqrt{1+c^2}}s}.$$
 Using that $\gamma$ is parametrized by arc-length, then $x'^2+y'^2+z'^2-z^2=0$. This gives 
 $\varphi'(s)=\frac{\sin\theta}{c}$. Thus, up to a constant on the parameter $s$, we have  $\varphi(s)=\frac{\sin\theta}{c}s$. Definitively, this defines the curve $\gamma$  is given by \eqref{h0}.
 
  We compute the curvature and the torsion of $\gamma$. First, we calculated the normal vector $N$ of $\gamma$. By using \eqref{r}, we have 
\begin{equation}\label{normal}
\begin{split}
\nabla_TT&=\nabla^e_TT-2\frac{T_3}{z}T+\frac{\langle T,T\rangle_e}{z}\e_3\\
&=\frac{m(c^2+\sin^2\theta)}{1+c^2}e^{\frac{\cos\theta}{\sqrt{1+c^2}}s}\left(-\frac{1 }{  c}\cos \left(\frac{s \sin (\theta )}{c}\right),\frac{ - 1}{ c}\sin \left(\frac{s \sin (\theta )}{c}\right) , 1 \right). 
\end{split}
\end{equation}
Computing the modulus, we obtain $\kappa$ given in \eqref{h01}. Hence we have  the unit normal vector by $N=\nabla_TT/k$. Now the computation of the torsion given in \eqref{h01} is obtained by the calculation of  $\nabla_TN$. 

In order to prove that the helices \eqref{h0} are geodesics  in the cylinder $\mathcal{C}_c$ it is enough to check that the normal vector $N$ of $\gamma$ coincides with that unit normal vector $G$ of $\mathcal{C}_c$ along $\gamma$. The cylinder $\mathcal{C}_c$ is given by the implicit equation $x^2+y^2-c^2z^2=0$. Since the hyperbolic metric is conformal to the Euclidean one, the vector $G$ is proportional to the Euclidean gradient $\nabla^e$ of the function $f(x,y,z)= x^2+y^2-c^2z^2$. Then 
\begin{equation*}
\begin{split}
\nabla^ef_{|\gamma}&=2(x,y,-c^2 z)_{|\gamma}\\
&=2me^{\frac{\cos\theta}{\sqrt{1+c^2}}s}\left(c \cos\frac{s\sin\theta}{c},c \sin \frac{s\sin\theta}{c}, -c^2\right)\\
\end{split}
\end{equation*}
This vector is proportional to $\nabla_TT$, \eqref{normal}, and thus it is proportional to $N$.
  \end{proof}
According to \cite{ba}, the axis of the helix \eqref{h0} is $\frac{V(s)}{|V(s)}|=\cos\theta T(s)+\sin\theta B(s)$ which can be easily checked.

 Helices of the Killing vector field $V$ are related with the curves in Euclidean space $\r^3$ that make a constant angle with the position vector. These curves were studied in \cite{bo}. Since the Euclidean and the hyperbolic metrics are conformal in the upper half space model of $\h^3$, these curves are candidates to be the helices \eqref{h0} that appear in Thm. \ref{t31}. In order to give a precise relation between both curves, we recall these curves. 
  
 For any $r,h\in\r$, $r>0$, define in Euclidean space $\r^3$the curve $\gamma_{r,h}$ by 
 $$\gamma_{r,h}(t)=e^t(r\cos t,r\sin t, h).$$
 This curve appears in \cite[p. 28]{bo} proving that $\gamma_{r,h}$ makes a constant angle with the position vector $\gamma_{r,h}(t)$ for all $t\in \r$. This curve is contained in the cone $\mathcal{C}_{r/h}$. We   see that viewed in hyperbolic space, the curve $\gamma_{r,h}$ makes a constant angle with $V$ and we also compute its curvature and torsion as curve of $\h^3$.  
 
 First, we need to parametrize $\gamma_{r,h}$ by the hyperbolic arc-length. An easy computation gives that this parametrization is  
 \begin{equation}\label{h1}
 \gamma_{r,h}(s)=e^{\phi(s)}(r\cos\phi(s),r\sin\phi(s),h),\quad \phi(s)=\frac{hs}{\sqrt{2r^2+h^2}}.
 \end{equation}
 We now calculate the angle between the curve $\gamma_{r,h}$ and $V$. We have 
  $$
  \cos\theta=\frac{\langle V,T\rangle}{|V|}=\frac{\sqrt{r^2+h^2}}{\sqrt{2r^2+h^2}}.$$
  Then $\sin\theta=\frac{r}{\sqrt{2r^2+h^2}}$. Notice that $|V|=\frac{\sqrt{r^2+h^2}}{h}$
  
 We compute the curvature and the torsion of $\gamma_{r,h}$ as curve in $\h^3$. By using the relation \eqref{r}, we have 
\begin{equation*}
\begin{split}
\nabla_TT&=\nabla^e_TT-2\frac{T_3}{\gamma_3}T+\frac{\langle T,T\rangle_e}{\gamma_3}\e_3\\
&=e^{\phi}\frac{2 h^2 r}{ 2 r^2+h^2}\left(-  \cos \phi,- \sin \phi,1\right)
\end{split}
\end{equation*}
  Hence 
  $$\kappa=|\nabla_TT|=\frac{2r\sqrt{r^2+h^2}}{2r^2+h^2}.$$
  The unit normal vector is  
  $$N=e^{\phi}\frac{  h^2}{ \sqrt{r^2+h^2}}\left(-  \cos \phi,- \sin \phi,1\right).$$
  To calculate the binormal vector, we know
  \begin{equation*}
\begin{split}
\tau B&=\nabla_TN+\kappa T\\
  &=\frac{h^3 e^{\phi}}{\sqrt{h^2+r^2} \left(h^2+2 r^2\right)^{3/2}}\left(  (h^2+r^2) \sin \phi+r^2 \cos \phi ,-(h^2+r^2) \cos \phi+r^2 \sin \phi , h  r  \right).
  \end{split}
  \end{equation*}
    Then the torsion is obtained by $\tau=\langle \tau B,N\rangle$, which yields
  $$\tau=\frac{h^2}{2r^2+h^2}.$$
  One can easily see that $\gamma_{r,h}$ is a helix with axis $V$ by checking the identity
  $$\frac{V}{|V|}=\cos\theta T+\sin\theta B.$$

  \subsection{The Killing vector field $W= -y\partial_x+x\partial_y$}

 \begin{theorem} \label{t32}
 Helices with axis the Killing vector field $W=-y\partial_x+x\partial_y$  and making an angle $\theta$ are the $z$-axis  of equation $\{x=0,y=0\}$ and the curves parametrized by
  \begin{equation}\label{w0}
 \gamma(s)=me^{\frac{\sin\theta}{\sqrt{1+c^2}}s}\left(c \cos\frac{s\cos\theta}{c},c \sin \frac{s\cos\theta}{c}, 1\right),
 \end{equation}
where $m,c>0$.  The curvature $\kappa$ and the torsion $\tau$ of $\gamma$ are
\begin{equation}\label{h00}
\kappa=\frac{c^2+\cos^2\theta}{c\sqrt{1+c^2}},\quad \tau=-\frac{\sin\theta\cos\theta}{c\sqrt{1+c^2}}.
\end{equation}
Moreover these helices are contained in the Killing cylinder $\mathcal{C}_c$ of equation $x^2+y^2=c^2z^2$ and they are geodesics in this cylinder.
 \end{theorem}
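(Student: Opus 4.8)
The plan is to follow verbatim the scheme used for Theorem \ref{t31}, simply replacing the dilation field $V$ by the rotational field $W$; the effect of this substitution is that the roles of $\cos\theta$ and $\sin\theta$ essentially interchange. The first step exploits the constant-length hypothesis. Writing $\gamma(s)=(x(s),y(s),z(s))$ and $W=(-y,x,0)$ in Euclidean coordinates, the hyperbolic norm is
$$|W|^2=\langle W,W\rangle=\frac{x^2+y^2}{z^2}.$$
Hence $|W|\equiv c$ along $\gamma$ forces $x^2+y^2=c^2z^2$, so the curve lies on the Killing cylinder $\mathcal{C}_c$ immediately. The value $c=0$ recovers the geodesic $\{x=0,y=0\}$; this is a degenerate limit in which $W$ vanishes along $\gamma$, and from now on I would assume $c>0$.

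Second, I would parametrize $\gamma$ on $\mathcal{C}_c$ as in \eqref{p1}, namely $\gamma(s)=(cz(s)\cos\varphi(s),cz(s)\sin\varphi(s),z(s))$, and impose the two defining conditions. A short computation in which the cross terms cancel gives $\langle\gamma',W\rangle=c^2\varphi'$, so that
$$\cos\theta=\frac{\langle\gamma',W\rangle}{|W|}=c\,\varphi',$$
which integrates to $\varphi(s)=\frac{\cos\theta}{c}s$. The arc-length condition $\langle\gamma',\gamma'\rangle=1$ reads $(1+c^2)z'^2+c^2z^2\varphi'^2=z^2$; substituting $\varphi'=\cos\theta/c$ reduces it to $z'=\pm\frac{\sin\theta}{\sqrt{1+c^2}}z$, whence $z(s)=me^{\frac{\sin\theta}{\sqrt{1+c^2}}s}$. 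This produces exactly the parametrization \eqref{w0}.

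Third, the curvature and torsion are obtained from the connection relation \eqref{r} as in Theorem \ref{t31}: I would first evaluate $\nabla_TT=\nabla^e_TT-2\frac{T_3}{z}T+\frac{\langle T,T\rangle_e}{z}\e_3$ to read off $\kappa=|\nabla_TT|$ and the unit normal $N$, and then compute $\nabla_TN$ to extract $\tau=\langle\nabla_TN+\kappa T,B\rangle$. I expect this bookkeeping to be the only laborious part of the argument; the single point that genuinely needs care is the sign of the torsion, which here comes out \emph{negative}, $\tau=-\frac{\sin\theta\cos\theta}{c\sqrt{1+c^2}}$, in contrast with the positive value in Theorem \ref{t31}.

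Finally, for the geodesic statement I would argue exactly as before. Since the hyperbolic and Euclidean metrics are conformal in the upper half space model, a unit normal $G$ of $\mathcal{C}_c$ is proportional to the Euclidean gradient of $f=x^2+y^2-c^2z^2$, that is, to $2(x,y,-c^2z)_{|\gamma}$. I would then check that this vector is proportional to the expression for $\nabla_TT$ found in the curvature step, hence proportional to $N$. The equality of $N$ and $G$ along $\gamma$ means that $\mathcal{C}_c$ absorbs the whole normal curvature of $\gamma$ and that its geodesic curvature vanishes, so $\gamma$ is a geodesic of the Killing cylinder.
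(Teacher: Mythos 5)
Your proposal is correct, and the first half---constant length forces $x^2+y^2=c^2z^2$, the parametrization \eqref{p1}, $\cos\theta=c\varphi'$, and the arc-length ODE giving $z(s)=me^{\frac{\sin\theta}{\sqrt{1+c^2}}s}$---is essentially verbatim the paper's argument. Where you diverge is the second half: you plan to recompute $\nabla_TT$, $\kappa$, $N$, $\nabla_TN$, $\tau$ and the gradient of $f=x^2+y^2-c^2z^2$ from scratch, whereas the paper avoids all of this by observing that \eqref{w0} is literally the curve \eqref{h0} with $\theta$ replaced by $\theta-\tfrac{\pi}{2}$; the Frenet data \eqref{h00} then follow from \eqref{h01} (note $\sin(\theta-\tfrac{\pi}{2})=-\cos\theta$, $\cos(\theta-\tfrac{\pi}{2})=\sin\theta$, which also explains the negative sign of $\tau$ you flagged), and the geodesic property in $\mathcal{C}_c$ is inherited from Theorem \ref{t31} at no cost. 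Both routes are valid: yours is self-contained but duplicates the laborious bookkeeping of Theorem \ref{t31}, while the paper's substitution trick is shorter and makes transparent the corollary that the helices of $x\partial_x+y\partial_y+z\partial_z$ and of $-y\partial_x+x\partial_y$ are the same curves with complementary angles. Your formula $\tau=\langle\nabla_TN+\kappa T,B\rangle$ is the correct extraction of the torsion, so if you carry out the computation you will indeed land on \eqref{h00}.
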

 
 \begin{proof}  
  Let $\gamma(s)=(x(s),y(s),z(s))$, $s\in I$, be a curve such that $V$ has  constant length along $\gamma$. Then there is $c\geq 0$ such that
  $\frac{x(s)^2+y(s)^2}{z(s)^2}=c^2$. If $c=0$, then $x(s)=y(s)=0$ for all $s\in I$. Then $\gamma$ is the $z$-axis. This curve makes an angle $\theta=\pi/2$ with $W$. 
  
  From now, we suppose that $c>0$. Thus $\gamma$ is contained in the cone $\mathcal{C}_c$ of equation  $x^2+y^2=c^2z^2$. Thus we can parametrize $\gamma$ as \eqref{p1}. Let $\theta$ be the angle that makes $\gamma$ with $W$. Notice that $|W|^2=\frac{x^2+y^2}{z^2}=c^2$. Then 
  $$\cos\theta=\frac{\langle W,\gamma'\rangle}{|W(\gamma)|}=\frac{-x'y+xy'}{cz^2}=c\varphi'.$$
  Thus, up to a constant in the parameter $s$, we have
  $$\varphi(s)=\frac{\cos\theta}{c}s.$$
  Using that $\gamma$ is parametrized by arc-length, and the parametrization \eqref{p1}, we have 
  $$c^2(z'^2+\varphi'^2 z^2)+z'^2=z^2.$$
  Since $\varphi'=\cos\theta/c$, this gives
  $$\frac{z'}{z}=\frac{\sin\theta}{\sqrt{1+c^2}}.$$
  Integrating this ODE by separation of variables, we deduce the existence of $m>0$ such that  
  $$z(s)=m e^{\frac{\sin\theta}{\sqrt{1+c^2}}s}.$$
  Using \eqref{p1} again,  the curve $\gamma$ is parametrized by \eqref{h0} where we replace $\theta$ in \eqref{h0} by $\theta-\frac{\pi}{2}$.
  
  Since these curves \eqref{w0} coincide with the helices \eqref{h0},  they are geodesics in the Killing cylinder  $\mathcal{C}_c$.
  \end{proof}

 As in the Euclidean case, the curves \eqref{w0} are helices for two different axes  of $\h^3$.
 
 \begin{corollary} In $\h^3$,   helices of a Killing vector field $x\partial_x+y\partial_y+z\partial_z$   coincide with the   helices of the Killing vector field $-y\partial_x+x\partial_y$. The angles of the helix with both axes are complimentary. 
  \end{corollary}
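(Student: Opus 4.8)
The plan is to derive this purely by comparing the explicit parametrizations already established in the two preceding theorems, exactly as was done for the Euclidean corollary relating $W$ and $\partial_z$. By Theorem \ref{t31} the helices with axis $V$ making angle $\theta$ are the $z$-axis together with the curves \eqref{h0}, and by Theorem \ref{t32} the helices with axis $W$ making angle $\theta$ are the $z$-axis together with the curves \eqref{w0}. The whole content of the corollary is the observation that these two one-parameter families (parametrized by $\theta$, with the auxiliary parameters $m,c$ free) coincide as point sets once one replaces the angle $\theta$ by its complement $\frac{\pi}{2}-\theta$.

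First I would carry out the substitution $\theta\mapsto\frac{\pi}{2}-\theta$ directly in \eqref{w0}. Using $\sin(\frac{\pi}{2}-\theta)=\cos\theta$ and $\cos(\frac{\pi}{2}-\theta)=\sin\theta$, the exponential factor $e^{\frac{\sin\theta}{\sqrt{1+c^2}}s}$ turns into $e^{\frac{\cos\theta}{\sqrt{1+c^2}}s}$ and the angular frequency $\frac{\cos\theta}{c}$ turns into $\frac{\sin\theta}{c}$, so that \eqref{w0} becomes literally the parametrization \eqref{h0}. Hence every helix of $W$ making angle $\theta$ is a helix of $V$ making angle $\frac{\pi}{2}-\theta$, and conversely; this is precisely the assertion that the angles of a given helix with the two axes are complementary. (Indeed, the author already records the same fact inside the proof of Theorem \ref{t32}, where \eqref{w0} is obtained from \eqref{h0} by replacing $\theta$ with $\theta-\frac{\pi}{2}$.)

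Then I would dispose of the degenerate case separately: the $z$-axis $\{x=0,y=0\}$ occurs in both theorems, being the helix (in fact geodesic) of $V$ for $\theta=0$ and the helix of $W$ for $\theta=\frac{\pi}{2}$, and these two angles are again complementary, so this boundary case is consistent with the claim. As a sanity check I would verify that the invariants match under the same substitution: comparing \eqref{h01} with \eqref{h00}, the curvature $\frac{c^2+\sin^2\theta}{c\sqrt{1+c^2}}$ becomes $\frac{c^2+\cos^2\theta}{c\sqrt{1+c^2}}$ and the torsion changes only by a sign.

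There is no genuine obstacle here, since the proof reduces to a direct comparison of two formulas. The only point requiring a word of care is the sign discrepancy between the torsions in \eqref{h01} and \eqref{h00}: this reflects merely the opposite sense in which the angular parameter $\varphi$ is traversed, that is, a choice of orientation of the binormal $B$ in the Frenet frame, and does not distinguish the underlying unparametrized curves. I would state this explicitly so that the identification of the two families is not obscured by the orientation convention.
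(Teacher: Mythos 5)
Your proposal is correct and is essentially the paper's own argument: the paper proves the corollary inside the proof of Theorem \ref{t32}, by observing that the parametrization \eqref{w0} is exactly \eqref{h0} with the angle replaced by its complement (the paper writes the substitution as $\theta-\frac{\pi}{2}$, which actually produces the mirror image across the plane $y=0$; your choice $\frac{\pi}{2}-\theta$ gives \eqref{w0} literally), together with the degenerate $z$-axis case you also treat. One small refinement to your sanity check: since under $\theta\mapsto\frac{\pi}{2}-\theta$ the two parametrizations coincide \emph{identically}, their Frenet frames and torsions must coincide as well, so the minus sign in \eqref{h00} is not a matter of a binormal ``choice'' (the Frenet binormal $B=T\times N$ admits none) but is the torsion of the reflected representative coming from the paper's $\theta-\frac{\pi}{2}$ substitution --- a sign inconsistency between \eqref{h00} and \eqref{h01} as written, which, as you say, does not affect the identification of the two families of curves.
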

 
  \subsection{The Killing vector field $P= \partial_x$}

 For $c>0$, let introduce the notation $\mathcal{H}_c=\{(x,y,c)\colon x,y\in\r\}$. This is the horizontal plane of equation $z=c$. In $\h^3$, the set $\mathcal{H}_c$ is a horosphere. This surface is totally umbilical and its mean curvature is $1$.

  \begin{theorem}\label{t33}
   Helices with axis $P=\partial_x$ and making an angle $\theta$ are parametrized by 
    \begin{equation}\label{h3}
  \gamma(s)=(s\cos\theta+x_0,s\sin\theta+y_0,c),
  \end{equation}
  where $x_0, y_0,c\in\r$, $c>0$.   The curvature and torsion of $\gamma$ is $\kappa=1$ and $\tau=0$, respectively.  Moreover, these helices are geodesics in the horosphere $\mathcal{H}_c$.  
\end{theorem}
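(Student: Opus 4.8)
The plan is to mirror the structure of the proofs of Theorems \ref{t31} and \ref{t32}, but to exploit the fact that the constant-length hypothesis is far more rigid for $P=\partial_x$, collapsing the whole problem onto a single horosphere. Let $\gamma(s)=(x(s),y(s),z(s))$ be parametrized by hyperbolic arc length with axis $P$. First I would compute the hyperbolic length of $P$ along $\gamma$: since $\langle\partial_x,\partial_x\rangle=1/z^2$, we have $|P(\gamma(s))|^2=1/z(s)^2$. The requirement that $P$ have constant length along $\gamma$ therefore forces $z(s)=c$ to be a positive constant, so that $\gamma$ lies entirely in the horizontal plane $z=c$, that is, in the horosphere $\mathcal{H}_c$. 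This is the decisive simplification: unlike the cases of $V$ and $W$, where the constant-length condition only confined $\gamma$ to a cone, here it pins down the $z$-coordinate outright, and in particular there is no degenerate branch, since $1/z^2$ never vanishes.

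Next I would extract the parametrization from the arc-length and angle conditions. With $z\equiv c$, the arc-length condition $\langle\gamma',\gamma'\rangle=1$ becomes $(x'^2+y'^2)/c^2=1$, while the constant-angle condition gives
$$\cos\theta=\frac{\langle P,\gamma'\rangle}{|P(\gamma)|}=\frac{x'/c^2}{1/c}=\frac{x'}{c},$$
so that $x'=c\cos\theta$ is constant; feeding this back into the arc-length relation yields $y'=c\sin\theta$ up to sign. Integrating gives $x(s)=cs\cos\theta+x_0$ and $y(s)=cs\sin\theta+y_0$, which reproduces the straight line \eqref{h3} in the plane $z=c$ (the coefficient $c$ being exactly what renders $s$ the hyperbolic arc length). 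Thus every general helix with axis $P$ is a Euclidean straight segment lying in a horosphere.

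The third step is the Frenet computation, which is short because the tangent has constant Euclidean components. With $T=\gamma'=(c\cos\theta,c\sin\theta,0)$ we have $T_3=0$ and $\nabla^e_TT=0$, so the conformal relation \eqref{r} collapses to
$$\nabla_TT=\frac{\langle T,T\rangle_e}{z}\e_3=\frac{c^2}{c}\e_3=c\,\e_3.$$
Its hyperbolic modulus is $\kappa=|c\,\e_3|=1$, and the unit normal is $N=c\,\e_3=(0,0,c)$. Applying \eqref{r} once more to $\nabla_TN$, the only surviving term is $-\frac{N_3}{z}T=-T$, so $\nabla_TN=-T$. Comparing with the Frenet identity $\nabla_TN=-\kappa T+\tau B$ and using $\kappa=1$ forces $\tau B=0$, hence $\tau=0$, as claimed.

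Finally, to see that these helices are geodesics of $\mathcal{H}_c$, I would identify the unit normal of the horosphere. The plane $z=c$ has Euclidean normal $\e_3$, and since the hyperbolic metric is conformal to the Euclidean one, its unit hyperbolic normal is $G=c\,\e_3=(0,0,c)$, which is exactly the principal normal $N$ computed above. Therefore $\nabla_TT$ is everywhere orthogonal to $\mathcal{H}_c$, the tangential (geodesic-curvature) component vanishes, and $\gamma$ is a geodesic of the horosphere. I do not expect any genuine obstacle here: the only conceptual point is recognizing that $|P|^2=1/z^2$ depends on $z$ alone, which reduces the problem to flat geometry on a horosphere, after which the curvature, torsion, and geodesic claims are all immediate consequences of the connection formula \eqref{r}.
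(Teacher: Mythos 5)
Your proposal is correct and follows essentially the same route as the paper's own proof: the constant-length condition forces $z\equiv c$ (hence confinement to the horosphere $\mathcal{H}_c$), the angle condition gives $x'=c\cos\theta$, and the conformal connection formula \eqref{r} yields $\nabla_TT=c\,\e_3$, $\kappa=1$, $\tau=0$, with $N=c\,\e_3$ identified as the unit normal of $\mathcal{H}_c$ to conclude the geodesic property. Note that, exactly as in the paper's proof, your integration produces $x(s)=cs\cos\theta+x_0$, $y(s)=cs\sin\theta+y_0$, so the missing factor $c$ in \eqref{h3} is a typo in the theorem statement shared by both arguments, not a defect of yours.
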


\begin{proof}
   Let $\gamma=\gamma(s)$, $s\in I$, be a curve in $\h^3$ such that $P$ has constant length along $\gamma$. If $\gamma(s)=(x(s),y(s),z(s))$, then there is $c>0$ such that 
  $$\frac{1}{c^2}=\langle P(\gamma(s)),P(\gamma(s))\rangle=\frac{\langle P(\gamma(s)),P(\gamma(s))\rangle_e}{z(s)^2}=\frac{1}{z(s)^2}.$$
  This implies that the curve $\gamma$ is contained in the horizontal plane $\mathcal{H}_{c}$, that is,  $\gamma$ is included in the horosphere $\mathcal{H}_c$.

  Let $\gamma(s)=(x(s),y(s),c)$ and  suppose that $\gamma$ is parametrized by arc-length. In particular, $x'^2+y'^2=c^2$. If $\theta$ is the angle that makes $\gamma$ with $P$, and because $|P(\gamma(s))|=1/c$, we have 
  $$\cos\theta= \langle\gamma'(s),P\rangle =\frac{\langle\gamma'(s),P\rangle_e}{c^2|P|}=\frac{x'}{c}.$$
  Thus there are $x_0,y_0\in\r$ such that $x(s)=s c \cos\theta+x_0$ and $y(s)=sc \sin\theta+y_0$. This gives the parametrization \eqref{h3}.

  We calculate the Frenet frame as well as  its curvature and torsion. Using \eqref{r}, we have 
  $$\nabla_TT=-2\frac{T_3}{\gamma_3}T+\frac{\e_3}{\gamma_3}=c\,\e_3.$$
Since    $|\nabla_TT|=1$, we obtain $\kappa=1$ and $N(s)=c\,\e_3$. For the calculation of the torsion, we compute $\nabla_TN$, obtaining 
$$
  \nabla_TN=N'-\frac{N_3}{\gamma_3}T-\frac{T_3}{\gamma_3}N=-T.$$
This proves that $\tau=0$ and the binormal vector is $B=c(\sin\theta,-\cos\theta,0)$. 
 
 Finally,      the normal vector of $\gamma$ is $N=c\e_3$ of $\gamma$ which coincides with the unit normal vector of the horosphere $\mathcal{H}_c$. This implies that  $\gamma$ is a geodesic in $\mathcal{H}_c$. 
\end{proof}
 
 Again, it is easy to check  $\partial_x=\cos\theta T+\sin\theta B$. On the other hand, these helices have   zero torsion. According to \cite[Thm. 1]{ba}, the curve must be  contained in a totally geodesic plane of $\h^3$. Indeed, the helix   \eqref{h3} is contained in the vertical plane of equation $\sin\theta x-\cos\theta y=y_0\cos\theta-x_0\sin\theta$ which it is a totally geodesic plane of $\h^3$.

\section*{Data Availability}

This article did not generate or analyze any datasets; therefore, data sharing is not relevant or applicable to this study.


\section*{Acknowledgements}
The author  has been partially supported by MINECO/MICINN/FEDER grant no. PID2023-150727NB-I00,  and by the ``Mar\'{\i}a de Maeztu'' Excellence Unit IMAG, reference CEX2020-001105- M, funded by MCINN/AEI/10.13039/ 501100011033/ CEX2020-001105-M.

\end{document}